\documentclass[12pt]{article}

\usepackage{amsmath, amsthm, amssymb}

\usepackage{fullpage}

\usepackage{hyperref}

\newtheorem{theorem}{Theorem}
\newtheorem{lemma}{Lemma}

\newtheorem{corollary}{Corollary}
\theoremstyle{definition}

\newtheorem{definition}{Definition}
\theoremstyle{remark}

\newcommand{\cF}{\mathcal{F}}

\title{Set systems containing no singleton intersection and the Delsarte number}
\author{William Linz\thanks{University of South Carolina, Columbia, SC, USA. ({\tt wlinz@mailbox.sc.edu}). Partially supported by NSF RTG Grant DMS 2038080.}}
\date{\today}

\begin{document}
\maketitle

 \begin{abstract}
 \noindent
 We prove that the maximum size of a family of $k$-element subsets of the set $[n] = \{1, 2, \ldots, n\}$ which contains no singleton intersection is $\binom{n-2}{k-2}$ when $3k-3 \le n \le k^2-k+1$. This improves upon a recent result of Cherkashin. Our proof uses Schrijver's variant of the Lov\'asz number and furnishes an infinite family of graphs where the Schrijver variant of the Lov\'asz number is strictly smaller than the Lov\'asz number. As a consequence of our result and a recent result of Keller and Lifshitz, it follows that for $k$ sufficiently large, the maximum size of a $k$-uniform family on $[n]$ containing no singleton intersection is $\binom{n-2}{k-2}$ for all $n\ge 3k-3$, which is the best possible threshold.
 \end{abstract}

\section{Introduction}
For positive integers $n$ and $k$, we denote the family of $k$-element subsets of $[n] := \{1, 2, \ldots, n\}$ by $\binom{[n]}{k}$. A family of sets $\cF \subset \binom{[n]}{k}$ is \emph{$t$-intersecting} if for all distinct $F, F'\in \cF$, it holds that $|F\cap F'| \ge t$. In the case that $t=1$, the family is called \emph{intersecting}. The classical Erd\H{o}s-Ko-Rado theorem~\cite{Erdos1961} shows that the maximum size of an intersecting family is $\binom{n-1}{k-1}$ when $n\ge 2k$ and that the maximum size of a $t$-intersecting family is $\binom{n-t}{k-t}$ when $n$ is sufficiently large. 

There has been much work done both on refining the original results of Erd\H{o}s, Ko and Rado for intersecting families and on extending the results to new settings. Many questions in extremal combinatorics which are extensions of the Erd\H{o}s-Ko-Rado theorem can be phrased as determining the maximum size of a $(n, k, L)$-system. 

\begin{definition}[$(n, k, L)$-system] 
Let $n$ and $k$ be positive integers and let $L$ be a set of integers such that $L \subset [0, k-1]$. Then a family of sets $\cF \subset \binom{[n]}{k}$ is an \emph{$(n, k, L)$-system} if for all distinct $F, F'\in \cF$, it holds that $|F\cap F'| \in L$. 
\end{definition}

It is often convenient to study independent sets of the \emph{generalized Johnson graph} $G(n, k, L)$ instead of studying $(n, k, L)$-systems directly. 

\begin{definition}[Generalized Johnson graph]
Let $n$ and $k$ be positive integers and let $L$ be a set of integers such that $L \subset [0, k-1]$. The \emph{generalized Johnson graph} $G(n, k, L)$ is the graph with $V(G(n, k, L)) = \binom{[n]}{k}$ and $AB\in E(G(n, k, L)) \iff |A\cap B| \notin L$. 

If $L = \{0, 1, \ldots, k-1\} \setminus \{\ell\}$, then we write $G(n, k, \ell)$ in place of $G(n, k, \{0, 1, \ldots, k-1\} \setminus \{\ell\})$. 
\end{definition}

Note that by definition the independent sets in the graph $G(n, k, L)$ correspond to $(n, k, L)$-systems, so the maximum size of an $(n, k, L)$-system is equal to the independence number $\alpha(G(n, k, L))$. 

After the $(t$-)intersecting families, the $(n, k, L)$-systems which have been most studied are those in which $L$ misses only one intersection size. Answering a question of Erd\H{o}s and S\'os~\cite{Erdos1975}, Frankl~\cite{Frankl1977} determined the maximum size of a family of sets containing no singleton intersection for sufficiently large $n$. 

\begin{theorem}[Frankl]\label{thm:franklsingleton} 
Let $k\ge 4$. Let $\cF \subset \binom{[n]}{k}$ be a $(n, k, \{0, 2, \ldots, k-1\})$-system. Then there exists an $n_0(k)$ such that for $n\ge n_0(k)$, 
\[|\cF| \le \binom{n-2}{k-2}.\]
\end{theorem}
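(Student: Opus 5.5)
We prove Frankl's theorem with the delta-system method. Write $\cF \subset \binom{[n]}{k}$ for a family with no two members meeting in exactly one element. The extremal example is all $k$-sets containing a fixed pair $\{x,y\}$, so the aim is to show that every large family is essentially of this form.

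\textbf{Step 1 (Füredi's structure theorem).} Apply the intersection-structure theorem of Füredi to $\cF$. For $n$ large, it yields a subfamily $\cF^* \subset \cF$ with $|\cF^*| \ge c(k)|\cF|$ and the following properties:
\begin{itemize}
\item $\cF^*$ is $k$-partite;
\item every pairwise intersection $F\cap F'$ inside $\cF^*$ is the kernel of a sunflower of size $k+1$ in $\cF^*$;
\item the projections of these intersections to $[k]$ form a family $\mathcal{J}$ that is closed under intersection.
\end{itemize}
Because $1 \notin L$, no kernel has size $1$. Hence $\mathcal{J}$ contains no singleton.

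\textbf{Step 2 (rank argument).} Using closure under intersection and the absence of singletons, show that every $F \in \cF^*$ contains a $2$-subset $T \subset F$ whose projection is not contained in any member of $\mathcal{J}$.

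Here is why. If every pair in $[k]$ were covered by some member of $\mathcal{J}$, we could intersect these members, using closure under intersection together with a small lemma on intersection-closed families of subsets of $[k]$ with no singletons. This would force a large $\mathcal{J}$-structure. Such a structure in turn yields two sets of $\cF$ meeting in exactly one element: take sunflower petals, which are disjoint outside the kernel, and combine them with kernels of size $2$ sharing one point.

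Since $T$ is not covered by any intersection, it lies in no other member of $\cF^*$. So $F \mapsto T$ is an injection from $\cF^*$ into a $2$-shadow-type family. This already gives $|\cF| = O(n^{k-2})$.

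\textbf{Step 3 (exact bound via stability).} If $|\cF| > \binom{n-2}{k-2}$, then $|\cF| = \Theta(n^{k-2})$. The counting in Step 2 must then be nearly tight, and this forces many members of $\cF$ to contain a common pair $\{x,y\}$. Split $\cF = \cF_0 \cup \cF_1$, where $\cF_0 = \{F \in \cF : \{x,y\} \subset F\}$ and $\cF_1$ is the rest.

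Take any $G \in \cF_1$. Say $G$ contains $x$ but not $y$; the cases where $G$ contains neither, or only $y$, are similar. Then $G$ must meet every $F \in \cF_0$ in either $0$ or at least $2$ elements. In particular, $G$ meets each such $F$ in $x$ and at least one further point. This forbids at least $\binom{n-k-1}{k-2}$ candidate sets $\{x,y\}\cup S$, namely those with $S$ disjoint from $G$.

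A counting, or weighted shifting, argument then shows that each member of $\cF_1$ costs more members of $\binom{[n]}{k}$ containing $\{x,y\}$ than it contributes. Here $k \ge 4$ is used so that $\binom{n-k-1}{k-2}$ dominates the relevant lower-order counts. This gives $|\cF| \le \binom{n-2}{k-2}$.

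\textbf{Main obstacle.} The main difficulty is Step 2: proving, for intersection-closed families on $[k]$ with no singletons, that every set has an uncovered $2$-subset and that the resulting injection loses no more than the leading term. This is the step where I would spend the most care. The hypothesis $k \ge 4$ should enter here as well; for $k = 3$, Fano-plane-type configurations give other extremal constructions.
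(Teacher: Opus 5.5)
The paper gives no proof of this statement. It is quoted from Frankl (1977), and the paper's own Delsarte-number method cannot reach it, since $\vartheta'(G(n,k,1))=\Theta(n^{k-1})$ for large $n$. A delta-system argument is therefore the right kind of route, but yours contains a false step and omits the decisive one.

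In Step 2, the claim that every $F\in\cF^*$ has a $2$-subset whose projection lies in no member of $\mathcal{J}$ fails for $k\ge 5$. The extremal star refutes it: there $|\cF^*|\ge c(k)\binom{n-2}{k-2}$, while the injection $F\mapsto T$ you describe would give $|\cF^*|\le\binom{n}{2}$. For the same reason it cannot yield $O(n^{k-2})$; it would yield $O(n^2)$. The correct (and easy) statement is that some $(k-2)$-subset of $[k]$ is uncovered. Suppose all were covered, and put $I=\{i:[k]\setminus\{i\}\in\mathcal{J}\}$. Intersecting $k-1$ such sets would give a singleton, so $|I|\le k-2$. Then $[k]\setminus\{i,j\}\in\mathcal{J}$ for all distinct $i,j\notin I$, and intersecting suitable sets of these two kinds again gives a singleton. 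This yields own $(k-2)$-subsets and $|\cF|\le c(k)^{-1}\binom{n}{k-2}$, and nothing sharper.

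Step 3 is the actual content of the theorem, and nothing in your outline produces the pair $\{x,y\}$. Because $c(k)$ is tiny, $|\cF|>\binom{n-2}{k-2}$ is far below $c(k)^{-1}\binom{n}{k-2}$, so there is no near-tightness to exploit. What is needed is a second layer of the method. Call a pair heavy if it is the kernel of a sunflower of size at least $2k$ in $\cF$. Then:
(i) no member of $\cF$ meets a heavy pair in exactly one point;
(ii) distinct heavy pairs are disjoint;
(iii) the members of $\cF$ containing no heavy pair number $O(n^{k-3})$.
Fact (iii) comes from applying the structure theorem to those members, observing that $\mathcal{J}$ now has no members of size $1$ or $2$, and proving that such a $\mathcal{J}$ leaves some $(k-3)$-set uncovered when $k\ge 4$. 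This is where $k\ge 4$ is essential, rather than in Step 2. For $k=3$, disjoint copies of $\binom{[4]}{3}$ give about $n>n-2$ sets and no heavy pair. The Fano plane, whose lines pairwise meet in exactly one point, is not a relevant example.

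Even with a single heavy pair $\{x,y\}$ in hand, your final count does not close. A set $G\in\cF$ disjoint from $\{x,y\}$ is not ``similar'' to one containing $x$. It excludes only the $k\binom{n-k-2}{k-3}=\Theta(n^{k-3})$ sets $\{x,y\}\cup S$ with $|S\cap G|=1$. The number of such $G$ is only known to be at most $C(k)n^{k-3}$ with an enormous $C(k)$, and the excluded sets may be shared by many $G$. So charging each $G$ separately proves nothing without an argument that controls the overlaps. The case of several heavy pairs is not addressed at all.
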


Frankl and F\"uredi~\cite{Frankl1985} would later prove similar results for $L = \{0, \ldots, k-1\} \setminus \{t-1\}$; in particular, they determined the order of magnitude of $\alpha(G(n, k, t-1))$ for all $k$ and $t$. By now the maximum size of a set system missing exactly one intersection size is known for almost all choices of the parameters $n, k, t$ provided that $n$ or $k$ is sufficiently large~\cite{Ellis2024, Keller2021, Kupavskii2024}. 

On the other hand, there are very few exact results on set families with a forbidden intersection size which hold for small values of $n$ and all values of $k$. Keevash, Mubayi and Wilson~\cite{Keevash2006} determined the maximum size of a $4$-uniform family with no singleton intersection for all values of $n$. Very recently, Cherkashin~\cite{Cherkashin2024} proved the following result by using the Hoffman bound. 

\begin{theorem}[Cherkashin]\label{thm:cherkashin}
Let $k > 1$. Suppose that $\cF$ is a $(k^2 - k + 1, k, \{0, 2, 3, \ldots, k-1\})$-system. Then
\[|\cF| \le \binom{k^2-k-1}{k-2}.\]
\end{theorem}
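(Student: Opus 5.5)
The plan is to apply the Hoffman ratio bound to the generalized Johnson graph $G=G(n,k,1)$ with $n=k^2-k+1$. In this graph two $k$-sets are adjacent exactly when they meet in a single point, and a $(n,k,\{0,2,\ldots,k-1\})$-system is an independent set of $G$. The graph $G$ is the distance-$(k-1)$ graph of the Johnson scheme $J(n,k)$, so it is regular of degree $D=k\binom{n-k}{k-1}$. Its eigenvalues are the Eberlein polynomial values
\[
\lambda_j=E_{k-1}(j)=\sum_{i\ge 0}(-1)^i\binom{j}{i}\binom{k-j}{k-1-i}\binom{n-k-j}{k-1-i},\qquad j=0,1,\ldots,k,
\]
with $\lambda_0=D$. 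Hoffman's bound then gives
\[
\alpha(G)\le \binom{n}{k}\frac{-\lambda_{\min}}{D-\lambda_{\min}}.
\]

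First I check which value of $\lambda_{\min}$ would give the target. Since $n-1=k(k-1)$, we have $\binom{n-2}{k-2}/\binom{n}{k}=k(k-1)/(n(n-1))=1/n$. So the bound equals $\binom{n-2}{k-2}$ exactly when
\[
\lambda_{\min}=-\frac{D}{n-1}=-\frac{\binom{n-k}{k-1}}{k-1}.
\]
The natural candidate is $j=1$, where only the terms $i=0,1$ survive:
\[
\lambda_1=\binom{n-k-1}{k-1}-(k-1)\binom{n-k-1}{k-2}.
\]
Writing $\binom{n-k-1}{k-1}=\binom{n-k-1}{k-2}\frac{n-2k+1}{k-1}$ and substituting $n-2k+1=(k-1)^2-k+1$, this should simplify to $-\binom{n-k}{k-1}/(k-1)$. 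The simplification uses $\binom{n-k}{k-1}=\binom{n-k-1}{k-2}\frac{n-k}{k-1}$ and the identity $n-2k+1-(k-1)^2=-(n-k)/(k-1)+\ldots$, which I will verify directly. This step is routine algebra, and the exact match at $n=k^2-k+1$ is why this particular $n$ appears in the statement.

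The main obstacle is to show that $\lambda_j\ge\lambda_1$ for every $j\in\{2,\ldots,k\}$. My first approach is to bound $|\lambda_j|$ for $j\ge 2$. The leading term $\binom{k-j}{k-1}\binom{n-k-j}{k-1}$ vanishes for $j\ge 2$, so $\lambda_j$ is an alternating sum whose dominant term should be $i=j-1$, roughly $\binom{j}{j-1}\binom{n-k-j}{k-j}$ up to sign. For $j\ge 2$ this term is a factor of order $n/k\approx k$ smaller than $\binom{n-k}{k-1}/(k-1)$.

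To make this rigorous, I would show that the terms in the sum decrease geometrically in $i$ away from the maximum. The ratio of consecutive terms is roughly $\frac{(j-i)(k-1-i)^2}{(i+1)(n-k-j-k+2+i)\cdot\ldots}$. Controlling these ratios gives $|\lambda_j|\le c\, j\binom{n-k-j}{k-j}$, which is less than $|\lambda_1|$ for $j\ge2$ and $k$ not too small. The small values of $k$ (say $k\le 4$) I would check directly. An alternative, if the direct estimates are messy, is to use the three-term recurrence or the known expression of $E_{k-1}(j)$ via Hahn polynomials to get monotonicity or sign information.

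Once $\lambda_{\min}=\lambda_1$ is established, Hoffman's bound yields $|\cF|\le\binom{n-2}{k-2}=\binom{k^2-k-1}{k-2}$.
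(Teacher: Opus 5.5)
Your reduction is correct: Hoffman's bound applies, $\binom{n-2}{k-2}=\binom nk/n$ at $n=k^2-k+1$, and $\lambda_1=\binom{n-k-1}{k-2}\frac{n-k^2}{k-1}=-\binom{n-k-1}{k-2}=-\binom{n-k}{k-1}/(k-1)$. The identity you left vague is just $n-2k+1-(k-1)^2=-(k-1)=-(n-k)/(k-1)$ at this $n$.

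\textbf{The gap.} The estimate you propose for the main step is false at $j=2$. For $j\ge1$ the factor $\binom{k-j}{k-1-i}$ vanishes unless $i\ge j-1$. So only $i=j-1$ and $i=j$ survive, and there is no geometric tail to control:
\[
\lambda_j=(-1)^{j-1}\binom{n-k-j}{k-j}\Bigl(j-\frac{(k-j)^2}{n-2k+1}\Bigr).
\]
At $n=k^2-k+1$ we have $n-2k+1=(k-1)(k-2)$ and $n-k-1=k(k-2)$. Hence
\[
\lambda_2=-\frac{k}{k-1}\binom{n-k-2}{k-2}=-\binom{n-k-1}{k-2}=\lambda_1 .
\]
So $\lambda_2$ ties exactly with $\lambda_1$, and no estimate giving $|\lambda_2|<|\lambda_1|$ can exist.

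Your bound $|\lambda_j|\le c\,j\binom{n-k-j}{k-j}$ is true with $c=1$, but at $j=2$ it is not below $|\lambda_1|$: $2\binom{n-k-2}{k-2}>\frac{k}{k-1}\binom{n-k-2}{k-2}=|\lambda_1|$ for all $k\ge3$. Only exact cancellation against the $i=j$ term brings $|\lambda_2|$ down to $|\lambda_1|$. Your heuristic that the $i=j-1$ term is a factor of about $k$ smaller than $\binom{n-k}{k-1}/(k-1)$ forgets the division by $k-1$; at this $n$ the two have the same order.

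The tie is structural. In general
\[
\lambda_1-\lambda_2=\frac{\binom{n-k-2}{k-2}}{(k-1)(n-2k+1)}(n-2)(n-k^2+k-1),
\]
so $n=k^2-k+1$ is exactly the threshold in the result of \cite{Brouwer2018} quoted in the paper.

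\textbf{The repair.} It is short once you use the closed form. For $2\le j\le k$ the bracket is positive, because $(k-j)^2\le(k-2)^2<(k-1)(k-2)$. Therefore:
\begin{itemize}
\item odd $j$ give positive eigenvalues;
\item $j=2$ gives exactly $\lambda_1$;
\item for even $j\ge4$, $|\lambda_j|\le j\binom{n-k-j}{k-j}\le j(k-1)^{-(j-2)}\binom{n-k-2}{k-2}<|\lambda_1|$.
\end{itemize}
The last line holds because each increment of $j$ multiplies $\binom{n-k-j}{k-j}$ by $\frac{k-j}{(k-1)^2-j}\le\frac{1}{k-1}$. The case $k=2$ (where $G=K_3$ and $n-2k+1=0$) is trivial.

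\textbf{Comparison with the paper.} With this fix your argument is Cherkashin's original Hoffman-bound proof. The paper instead obtains the statement as the endpoint $n=k^2-k+1$ of Theorem~\ref{thm:wilsonworks}. There, $M=J-\binom{n-2}{k-2}A_2$ has entries exactly $1$ on non-edges, and its largest eigenvalue is $\binom{n-2}{k-2}$ by Wilson's lemma (valid since $k^2-k+1\ge 3k-3$). So the paper never has to locate the smallest eigenvalue of $G(n,k,1)$, which is precisely where your sketch goes wrong.
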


In this note, we improve on Cherkashin's result by determining the maximum size of a $(n, k, \{0, 2, \ldots, k-1\})$-system when $3k-3 \le n \le k^2-k+1$. 

\begin{theorem}\label{thm:nosingletonint}
Let $n$ and $k$ be integers with $k\ge 3$ and $3k-3\le  n \le k^2-k+1$. Suppose that $\cF \subset \binom{[n]}{k}$ is a $(n, k, \{0, 2, 3, \ldots, k-1\})$-system. Then 
\[|\cF| \le \binom{n-2}{k-2}.\]
\end{theorem}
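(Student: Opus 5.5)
The plan is to bound $\alpha(G(n,k,\{0,2,\ldots,k-1\}))$ by Schrijver's variant $\vartheta'$ of the Lov\'asz number, which in the Johnson scheme is the Delsarte linear programming bound. Let $A_1$ be the adjacency matrix of the graph $G$ in which $A,B$ are adjacent iff $|A\cap B|=1$, and let $E_0,\dots,E_k$ be the primitive idempotents of the Johnson scheme $J(n,k)$. The eigenvalue of $A_1$ on the $j$-th eigenspace is the Eberlein polynomial $E_1(j)=\sum_{h}(-1)^h\binom{j}{h}\binom{k-j}{1-h}\binom{n-k-j}{k-1-h}$, which works out to $(k-j)\binom{n-k-j}{k-1}-j\binom{n-k-j}{k-2}$.

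For the Delsarte bound I need a matrix $M=I+xA_1+\sum_{i\ne 1,k} y_iA_{k-i}$ that is positive semidefinite, has nonpositive entries on non-edges (so $y_i\le 0$, which is where Schrijver's sign condition enters), and makes $\langle J,M\rangle/\lambda_0$ small. Equivalently, I need a polynomial $F(j)=1+xE_1(j)+\sum_i y_i P_i(j)$ with $F(j)\ge0$ for $j\ge1$, and then $\alpha\le\binom nk/F(0)$. Since the target is $\binom{n-2}{k-2}$, I need $F(0)=\binom nk/\binom{n-2}{k-2}=n(n-1)/(k(k-1))$.

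First I would try the simplest certificate: $A_1$ together with one more relation, namely disjointness $A_0$. The Hoffman bound for $A_1$ alone is exactly what Cherkashin used at $n=k^2-k+1$, and it presumably fails below that. Adding $A_0$ with a nonpositive coefficient is precisely the freedom that $\vartheta'$ gives over $\vartheta$. That is natural here, because the extremal family (all $k$-sets containing $\{1,2\}$) also has no disjoint pairs. So I would take
\[
F(j)=1+a\,E_1(j)-b\,K_k(j), \qquad b\ge0,
\]
where $K_k(j)=(-1)^j\binom{n-k-j}{k-j}$ are the Kneser eigenvalues. I would then choose $a,b$ so that $F(1)=F(2)=0$; these are the eigenspaces "used" by the extremal star of a $2$-set, so complementary slackness forces these zeros. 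Then I would verify $F(0)=n(n-1)/(k(k-1))$, which should be an identity after solving the $2\times2$ system. I must also check $b\ge0$ and $a$ of the correct sign.

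The main obstacle is proving $F(j)\ge0$ for all $3\le j\le k$ across the full range $3k-3\le n\le k^2-k+1$. The endpoints should be where this is tight. At $n=k^2-k+1$, $b$ should become $0$, recovering Hoffman. At $n=3k-3$, some $F(j)$ (probably $j=3$, or the sign alternation at $j=k$) should hit zero. I would handle this by writing $E_1(j)$ and $K_k(j)$ as ratios relative to $j=1,2$, showing $|K_k(j)|$ decays fast enough in $j$ when $n\ge3k-3$ so that the $E_1$ term dominates. The $E_1$ term is positive and large for large $j$ because $-j\binom{n-k-j}{k-2}$ flips sign appropriately. The argument would split into even and odd $j$ and use monotonicity of the ratios $\binom{n-k-j}{k-1}/\binom{n-k-j}{k-2}=(n-2k-j+2)/(k-1)$. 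If two relations turn out not to suffice, the fallback is to add $A_{k-2}$ or other small-intersection relations with nonpositive coefficients, but I would first try to push the two-parameter certificate through.
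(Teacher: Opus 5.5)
Your certificate is the right one, and it is in fact the paper's. Since $D_k=A_0$ and $D_{k-1}=kA_0+A_1$, Wilson's matrix is $A_2=aA_1-bA_0$ with $a=\binom{n-k-1}{k-2}^{-1}$ and $b=\frac{k^2-k+1-n}{(n-k-1)\binom{n-k-2}{k-2}}$. So your $I+aA_1-bA_0$ is $I+A_2$, and your $F(j)$ is Wilson's $1+\theta_j$. Your conditions $F(1)=F(2)=0$, $F(0)=\frac{n(n-1)}{k(k-1)}$ and $b\ge 0\iff n\le k^2-k+1$ are exactly Lemma~\ref{lem:wilsoneigs} together with the entry computation in the paper. (No sign condition on $a$ is needed, since it sits on the edges.)

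The gap is the step you yourself flag, $F(j)\ge 0$ for $3\le j\le k$, and your proposed route to it rests on a wrong eigenvalue formula. The eigenvalue of $A_1$ on $V_j$ is not $(k-j)\binom{n-k-j}{k-1}-j\binom{n-k-j}{k-2}$. For $n=6$, $k=3$, $j=1$ that gives $0$ instead of the true value $-3$, and solving $F(1)=F(2)=0$ with it already forces $b=-1<0$. In the Eberlein sum, both of the last two binomials must have lower index $k-1-h$, so only $h\in\{j-1,j\}$ survive:
\[
P(j)=(-1)^j\Bigl[(k-j)\tbinom{n-k-j}{k-1-j}-j\tbinom{n-k-j}{k-j}\Bigr]=(-1)^j\tbinom{n-k-j}{k-j}\Bigl[\tfrac{(k-j)^2}{n-2k+1}-j\Bigr].
\]
Hence your heuristics do not describe what actually happens: the $A_1$ term being ``positive and large for large $j$'', the monotonicity of $(n-2k-j+2)/(k-1)$, and tightness at $j=3$.

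With the correct eigenvalues one gets
\[
F(j)=1-(-1)^jX_j,\qquad X_j=\frac{\binom{n-k-j}{k-j}\bigl[(j-1)(n-j)-(k-1)\bigr]}{(n-k-1)\binom{n-k-2}{k-2}}.
\]
Odd $j\ge 3$ are then automatic, since $X_j>0$. Even $j\ge 4$ require $X_j\le 1$, which is tight at $j=4$, $n=3k-3$ (not at $j=3$). This inequality is precisely part 3 of Wilson's lemma for $t=2$, $n\ge (t+1)(k-t+1)=3k-3$.

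The paper does not prove this inequality either; it cites Wilson. To complete your argument you must do one of two things:
\begin{itemize}
\item identify your matrix with Wilson's $A_2$ and invoke Lemma~\ref{lem:wilsoneigs}; or
\item genuinely prove $X_j\le 1$ for even $4\le j\le k$ and $3k-3\le n$. For example, show that $X_4$ is decreasing in $n$ with $X_4=1$ at $n=3k-3$, and that $X_{j+2}\le X_j$.
\end{itemize}
As written, this step is only asserted, not established.
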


The lower bound on $n$ in Theorem~\ref{thm:nosingletonint} is best possible, as there are $2$-intersecting families with size greater than $\binom{n-2}{k-2}$ when $n < 3k-3$ (see \cite{Ahlswede1997}). The proof of Theorem~\ref{thm:nosingletonint} follows from a computation of Schrijver's variant of the Lov\'asz number for the graph $G(n, k, 1)$. In the course of the proof, we show that the graphs $G(n, k, 1)$ give an infinite family of examples for which Schrijver's variant of the Lov\'asz number is strictly smaller than the Lov\'asz number. 

Keller and Lifshitz~\cite[Theorem 1.4]{Keller2021} proved a very general result on Tur\'an-type problems where the extremal family is the $t$-star. We state the implication of their result for families containing no singleton intersection. 

\begin{theorem}[Keller--Lifshitz]\label{thm:kellerlifshitz}
There exists a constant $C$ such that the following holds. Let $C\le k \le n/C$ and let $\cF\subset \binom{[n]}{k}$ be a $(n, k, \{0, 2, 3, \ldots, k-1\})$-system. Then $|\cF| \le \binom{n-2}{k-2}$. 
\end{theorem}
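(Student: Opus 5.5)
We will not reprove the general Turán-type machinery of Keller and Lifshitz. Instead, we plan to obtain the statement by specialising their junta-method theorem~\cite[Theorem 1.4]{Keller2021} to the single forbidden configuration relevant here. That configuration is a pair of $k$-sets meeting in exactly one element. It is the $k$-uniform expansion of a two-edge kernel sharing one vertex, and it has bounded ``centre size'' independent of $k$ and $n$. The plan has three stages: junta approximation, identification of the extremal junta as the $2$-star, and a stability-to-exactness bootstrap. Each stage is valid once $C\le k\le n/C$ for a suitably large absolute constant $C$.

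\textbf{Junta approximation.} Let $\cF\subset\binom{[n]}{k}$ be an $(n,k,\{0,2,\ldots,k-1\})$-system with $|\cF|\ge\binom{n-2}{k-2}$. We apply the Keller--Lifshitz junta approximation theorem, which rests on their sharp-threshold results for global families. This gives a set $J\subset[n]$ with $|J|=O(1)$ and a $J$-junta $\mathcal{J}$ such that $|\cF\setminus\mathcal{J}|$ is a small fraction of $\binom{n-2}{k-2}$. Moreover, $\mathcal{J}$ itself contains no singleton intersection among its ``generic'' members. The hypothesis $k\ge C$ enters here: it makes the expansion fit the regime where the forbidden configuration is found inside any global (pseudorandom) family of density $\gg\binom{n-2}{k-2}/\binom nk$. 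The hypothesis $n\ge Ck$ keeps the measure $k/n$ small enough for their hypercontractivity estimates.

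\textbf{Identification of the junta, then exactness.} A junta on a bounded set $J$ is determined by a family $\mathcal{A}\subset 2^J$ of admissible traces. If two traces $A,B\in\mathcal{A}$ satisfied $|A\cap B|\le 1$, generic extensions outside $J$ would realise a singleton intersection. We will show the same happens for traces with $|A\cap B|=0$, since extensions can then be chosen to share exactly one new element. Hence every two traces meet in at least $2$ elements of $J$, and a short counting argument shows that the largest such junta is a $2$-star $\{F: \{a,b\}\subset F\}$. This gives stability: $\cF$ is $o(1)$-close to some $2$-star $\mathcal{S}_{ab}$.

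For exactness, suppose a set $G\in\cF$ misses $a$ or $b$. We will count the members of $\mathcal{S}_{ab}$ that meet $G$ in exactly one element. Each of these is excluded from $\cF$ whenever $\cF$ contains it. This number is a constant fraction of $\binom{n-2}{k-2}$, and it remains so after accounting for sets that differ in only a few coordinates. Hence removing $G$ from the star structure costs more than it gains, which yields $|\cF|\le\binom{n-2}{k-2}$.

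\textbf{Main obstacle.} We expect the hardest step to be the junta approximation for $k$ as large as $n/C$. Classical Dinur--Friedgut juntas need $k/n$ tending to $0$, whereas here $k/n$ may be of constant order, so the global hypercontractivity of Keller--Lifshitz is genuinely needed. Our first attempt would be to invoke their regularity lemma in its stated form and check only that the configuration ``two edges meeting in one vertex'' meets its hypotheses. By contrast, the final exactness bootstrap should be routine once $C$ is large.
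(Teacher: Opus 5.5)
The paper gives no proof of this statement. It is quoted from \cite[Theorem 1.4]{Keller2021}, which is already an exact Tur\'an-type result with the $t$-star as extremal family, and the singleton-intersection case is read off from it. If you really cite that theorem, all that remains is to check its hypotheses. Your sketch does something else: it black-boxes only the junta approximation and rebuilds exactness yourself. The step you call routine fails as written.

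\textbf{The gap.} You claim that a single $G\in\cF\setminus\mathcal{S}_{ab}$ excludes a constant fraction of $\mathcal{S}_{ab}$. This is false in most of the range $C\le k\le n/C$.

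If $b\in G$ and $a\notin G$, the star members meeting $G$ in exactly one element are those with $(S\setminus\{a,b\})\cap G=\emptyset$. Their number is
\[\binom{n-k-1}{k-2}=\prod_{i=0}^{k-3}\Bigl(1-\frac{k-1}{n-2-i}\Bigr)\binom{n-2}{k-2}\le e^{-(k-1)(k-2)/(n-2)}\binom{n-2}{k-2},\]
which for $n=Ck$ is roughly an $e^{-k/C}$ fraction.

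If $a,b\notin G$, the count is $k\binom{n-k-2}{k-3}$. As a fraction of $\binom{n-2}{k-2}$ this is at most $\frac{k(k-2)}{n-2}e^{-(k-1)(k-3)/(n-3)}$. That tends to $0$ when $k^2/n\to\infty$, and also when $k$ is bounded and $n\to\infty$.

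So the per-set loss is a constant fraction only when $k^2=\Theta(n)$. Enlarging $C$ does not help, because $k^2/n$ is unbounded in both directions on the allowed range. Consequently an $o(1)$-closeness to $\mathcal{S}_{ab}$ cannot force $\cF\subseteq\mathcal{S}_{ab}$ by this count.

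\textbf{What is missing.} You need a collective argument showing that $\cF\setminus\mathcal{S}_{ab}$ as a whole excludes at least $|\cF\setminus\mathcal{S}_{ab}|$ members of the star.

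For sets containing exactly one of $a,b$, this can be organised through links. The family $\{F\setminus\{a\}: a\in F\in\cF,\ b\notin F\}$ is intersecting, so by Erd\H{o}s--Ko--Rado it has size at most $\binom{n-3}{k-2}$. It is also cross-intersecting with $\{F\setminus\{a,b\}:\{a,b\}\subseteq F\in\cF\}$, so a Kruskal--Katona shadow bound applies.

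Sets avoiding both $a$ and $b$ are harder. They impose only the condition $|(S\setminus\{a,b\})\cap G|\neq 1$ on the star, which is not a cross-intersection condition. This needs a separate, genuinely nontrivial argument, and a sharper stability input than $o(1)$-closeness.

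Without this, your proposal yields at most approximate stability, not the exact bound $|\cF|\le\binom{n-2}{k-2}$. The real difficulty lies here, not only in the junta approximation you identified as the main obstacle.
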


The following corollary immediately follows from Theorem~\ref{thm:nosingletonint} and Theorem~\ref{thm:kellerlifshitz}.

\begin{corollary}
Let $C$ be a sufficiently large integer. Let $\cF \subset \binom{[n]}{k}$ be a $(n, k, \{0, 2, 3, \ldots, k-1\})$-system. For all $k\ge C$ and $n\ge 3k-3$,
\[|\cF| \le \binom{n-2}{k-2}.\]
\end{corollary}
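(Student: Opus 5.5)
We bound $\alpha(G(n,k,1))$ by Schrijver's variant $\vartheta'$ of the Lov\'asz number, in its Delsarte linear programming form on the Johnson scheme $J(n,k)$. The graph $G(n,k,1)$ is a union of classes of the scheme: $A,B$ are adjacent exactly when $|A\cap B|=1$. For such graphs $\vartheta'$ equals the Delsarte LP bound. Concretely, it suffices to exhibit a symmetric matrix
\[
M=\sum_{i=0}^{k} x_i A_i
\]
in the Bose--Mesner algebra, where $A_i$ is the matrix of pairs with $|A\cap B|=k-i$, satisfying three conditions:
\begin{itemize}
\item $x_0=1$;
\item $x_i\le 0$ for every class $i$ that is a non-edge, i.e.\ $k-i\in\{0,2,3,\dots,k-1\}$;
\item $M\succeq 0$.
\end{itemize}
Then $\alpha\le \binom{n}{k}/\lambda_0(M)$, where $\lambda_0(M)$ is the eigenvalue on the all-ones vector. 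Equivalently, we need a nonnegative combination of eigenvalues, computed with the Eberlein (dual Hahn) polynomials $E_j(i)$, that certifies the bound $\binom{n-2}{k-2}$. The choice of $\vartheta'$ rather than $\vartheta$ matters because of the sign constraints on the non-edge entries. This is also why we expect to obtain $\vartheta'<\vartheta$ along the way.

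To find the right $M$, use the extremal family, the $2$-star $\mathcal S=\{F: \{1,2\}\subset F\}$, and complementary slackness. Let $a_i$ be the inner distribution of $\mathcal S$. We need $\langle M,\mathbf 1_{\mathcal S}\mathbf 1_{\mathcal S}^T\rangle = |\mathcal S|$ and hence $\sum_i a_i x_i=1$. Since $\mathcal S$ realizes intersection sizes $2,\dots,k-1$ and also the entry $x_0=1$, we should set $x_i=0$ for all non-edge classes with intersection $\ge 2$. The projection of $\mathbf 1_{\mathcal S}$ lives in the eigenspaces $V_0,V_1,V_2$, so $M$ should vanish there except on $V_0$. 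So the ansatz is to use only the unknown $x$ for intersection $1$, the unknown $x$ for intersection $0$ (constrained $\le 0$), and $x_0=1$:
\[
M = I + \alpha A_{\cap=1} + \beta A_{\cap=0}.
\]
We choose $\alpha,\beta$ so that the eigenvalues of $M$ on $V_1$ and $V_2$ vanish. This gives two linear equations in $\alpha,\beta$, solved explicitly via $E_1,E_2$ evaluated at intersections $1$ and $0$.

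We then verify three things:
\begin{itemize}
\item $\beta\le 0$. We expect this to be exactly where $n\le k^2-k+1$ enters; at $n=k^2-k+1$ we expect $\beta=0$, recovering Cherkashin's Hoffman-type bound (Theorem~\ref{thm:cherkashin}).
\item The eigenvalue of $M$ on $V_0$ equals $\binom{n}{k}/\binom{n-2}{k-2}$. This should follow automatically from tightness.
\item The eigenvalues $\lambda_j(M)=1+\alpha E_j(\cdot)+\beta E_j(\cdot)$ are $\ge 0$ for all $j=3,\dots,k$, using the closed forms $E_j(k)=(-1)^j\binom{k}{j}\binom{n-k}{k-j}/\binom{k}{j}\cdots$ (intersection $0$) and the analogous formula at intersection $1$.
\end{itemize}

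The main obstacle is the last verification: positivity of all eigenvalues for $j\ge3$ uniformly over $3k-3\le n\le k^2-k+1$. Our plan is to write $\lambda_j$ as an explicit rational function of $n,k,j$. The eigenvalues of the disjointness graph are $(-1)^j\binom{n-k-j}{k-j}$, and those of the intersection-one graph have a two-term formula. We would then show that $|\lambda_j|$ decays in $j$ fast enough that the $j=3$ term dominates, and handle the sign alternation by pairing consecutive $j$. The lower bound $n\ge 3k-3$ should appear as the condition $\lambda_3\ge0$, consistent with the Ahlswede--Khachatrian sharpness. If the single ansatz fails somewhere in the range, the fallback is to also allow a nonpositive coefficient at intersection $2$ and re-solve.
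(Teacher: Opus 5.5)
There is a genuine gap. Your method can only ever reach $3k-3\le n\le k^2-k+1$, but the corollary claims the bound for every $n\ge 3k-3$. Notice that you never use the hypothesis $k\ge C$, and that hypothesis is exactly what the statement hinges on. On the restricted range your plan is sound and is the paper's proof of Theorem~\ref{thm:nosingletonint}. Your complementary-slackness ansatz $M=I+\alpha A_{\cap=1}+\beta A_{\cap=0}$, with eigenvalue $0$ on $V_1,V_2$, has a unique solution: the $2\times 2$ system has determinant a nonzero multiple of $n-2$. That solution is Wilson's matrix $I+A_2$ from \eqref{wilsonmatrix}. Your $\beta$ is its intersection-$0$ entry $\frac{k}{\binom{n-k-1}{k-2}}-\frac{k-1}{\binom{n-k-2}{k-2}}$, which is $\le 0$ iff $n\le k^2-k+1$. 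The step you call the main obstacle, nonnegativity on $V_3,\dots,V_k$ for $n\ge 3k-3$, is exactly Lemma~\ref{lem:wilsoneigs}(3). You should cite it rather than attempt it by a decay/pairing heuristic.

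For $n>k^2-k+1$, however, no Delsarte certificate can give $\binom{n-2}{k-2}$, and your fallback cannot help. Apply your own slackness argument to the $2$-star $\mathcal{S}$; by symmetrizing over $S_n$ you may assume $M$ lies in the Bose--Mesner algebra.
\begin{itemize}
\item A certificate attaining $\binom{n-2}{k-2}$ needs $\mathbf{1}_{\mathcal{S}}^T M\mathbf{1}_{\mathcal{S}}=|\mathcal{S}|$. This forces every coefficient on intersections $2,\dots,k-1$ to vanish. A negative coefficient at intersection $2$ makes $N/\lambda_0(M)$ strictly larger than $|\mathcal{S}|$, so your fallback only weakens the bound.
\item The certificate must also have eigenvalue $0$ on $V_1$ and $V_2$.
\item It is therefore forced to be $I+A_2$, which is infeasible in this range because $\beta>0$.
\end{itemize}
Hence $\vartheta'(G(n,k,1))>\binom{n-2}{k-2}$ for every $n>k^2-k+1$; indeed it is $\Theta(n^{k-1})$ as $n\to\infty$. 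The large-$n$ range needs a genuinely different, non-spectral input.

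The paper uses the Keller--Lifshitz theorem (Theorem~\ref{thm:kellerlifshitz}), valid for $C\le k\le n/C$. Since $k^2-k+1-Ck=k(k-C-1)+1>0$ whenever $k\ge C+1$, every $n\ge 3k-3$ lies either in $[3k-3,k^2-k+1]$ or in $[Ck,\infty)$. The first range is covered by Theorem~\ref{thm:nosingletonint} and the second by Keller--Lifshitz. This two-range covering is the entire proof of the corollary. Frankl's Theorem~\ref{thm:franklsingleton} would not substitute for Keller--Lifshitz, since as stated it gives no control on $n_0(k)$.
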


Thus, for sufficiently large $k$ the best possible threshold $n_0(k) = 3k-3$ for a family $\cF$ with no singleton intersection to have size at most $\binom{n-2}{k-2}$ is the same as the threshold for a $2$-intersecting family to have size at most $\binom{n-2}{k-2}$.

\section{The Delsarte number and the proof of Theorem~\ref{thm:nosingletonint}} 

For any graph $G = (V, E)$, the Lov\'asz number of $G$ is defined as follows: for any matrix $A$, let $\text{lev}(A)$ denote the largest eigenvalue of $A$. Then, 
\[\vartheta(G) = \min\{\text{lev}(A): \text{$A$ is a symmetric $|V|\times |V|$ matrix such that $a_{ij} = 1$ if $\{i, j\}\notin E$}\}.\]
The Lov\'asz number provides an upper bound for the Shannon capacity $\Theta$. 

\begin{theorem}[Sandwich theorem\cite{Lovasz1979}]\label{thm:sandwichthm}
For any graph $G$,
\[\alpha(G) \le \Theta(G) \le \vartheta(G).\]
\end{theorem}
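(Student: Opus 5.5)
The plan is to prove the two inequalities separately. The left inequality is elementary. For the right one I would show $\alpha(G)\le\vartheta(G)$ and that $\vartheta$ is submultiplicative under the strong product $\boxtimes$ used in the definition $\Theta(G)=\sup_m \alpha(G^{\boxtimes m})^{1/m}$.

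For $\alpha(G)\le\Theta(G)$: if $S$ is an independent set of $G$, then $S^m$ is independent in $G^{\boxtimes m}$. Indeed, two distinct tuples in $S^m$ differ in some coordinate, and there the two entries are distinct and non-adjacent, so the tuples are non-adjacent in the strong product. Hence $\alpha(G^{\boxtimes m})\ge \alpha(G)^m$, and taking $m$-th roots and the supremum gives the claim.

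Next, $\alpha(G)\le \mathrm{lev}(A)$ for every feasible $A$. Here I read the condition ``$a_{ij}=1$ if $\{i,j\}\notin E$'' as including $i=j$, so the diagonal of $A$ is all ones. Let $S$ be independent and let $x=\mathbf 1_S$. Then the principal submatrix of $A$ on $S$ is the all-ones matrix, so $x^TAx=|S|^2$. The Rayleigh quotient gives $\mathrm{lev}(A)\ge x^TAx/x^Tx=|S|$. Minimizing over $A$ yields $\alpha(G)\le\vartheta(G)$.

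It then suffices to prove $\vartheta(G\boxtimes H)\le\vartheta(G)\vartheta(H)$. This gives $\alpha(G^{\boxtimes m})\le\vartheta(G)^m$, and so $\Theta(G)\le\vartheta(G)$. Here the naive Kronecker product $A\otimes B$ of optimal matrices does not work. A pair $(i,k),(j,l)$ is non-adjacent in $G\boxtimes H$ as soon as one coordinate pair is distinct and non-adjacent, but $a_{ij}b_{kl}=1$ would require both factors to equal $1$.

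This is the main obstacle. I would route around it through Lovász's orthonormal-representation formulation. An orthonormal representation assigns unit vectors $u_i$ with $u_i\perp u_j$ whenever $i,j$ are distinct and non-adjacent. The associated quantity is
\[\vartheta'(G)=\min_{(u_i),\,c}\ \max_i \frac{1}{(c^Tu_i)^2},\]
where $c$ ranges over unit vectors.

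I would first prove $\vartheta'=\vartheta$ following Lovász's Theorem 3, using SDP duality or an explicit construction:
\begin{itemize}
\item From an optimal representation one builds a feasible matrix with $\mathrm{lev}\le\vartheta'$.
\item Conversely, from an optimal $A$ one forms $\vartheta(G)I-A\succeq 0$ and factors it as a Gram matrix. The resulting vectors, after rescaling, yield a representation with handle achieving $\vartheta$.
\end{itemize}

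With $\vartheta'$ in hand, submultiplicativity is straightforward. The vectors $u_i\otimes v_k$ with handle $c\otimes d$ form an orthonormal representation of $G\boxtimes H$, because $(u_i\otimes v_k)^T(u_j\otimes v_l)=(u_i^Tu_j)(v_k^Tv_l)$ vanishes when either factor does. Moreover $(c\otimes d)^T(u_i\otimes v_k)=(c^Tu_i)(d^Tv_k)$, which gives the product bound.

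The equivalence $\vartheta=\vartheta'$ is the step I expect to require real work, most likely via the SDP strong-duality argument. Everything else is bookkeeping.
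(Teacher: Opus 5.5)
Your proof is correct and is essentially Lov\'asz's original argument, which the paper cites rather than reproves: $\alpha\le\Theta$ via $S^m$, $\alpha\le\vartheta$ via the Rayleigh quotient, submultiplicativity of the orthonormal-representation number under $\boxtimes$ via tensor products, and Lov\'asz's Theorem~3 to identify that number with the eigenvalue formulation used here. Two small fixes: rename your representation quantity, since in this paper $\vartheta'$ is Schrijver's number, which can be strictly smaller than $\vartheta$ (Theorem~\ref{thm:gkk-11lovasztheta}); and the matrix-to-representation step needs no SDP duality, but it is more than a rescaling---with $\lambda=\mathrm{lev}(A)$, write $\lambda I-A=(x_i^Tx_j)$, take a unit vector $c$ orthogonal to all $x_i$, and set $u_i=(c+x_i)/\sqrt{\lambda}$, so that $|u_i|=1$, $u_i^Tu_j=(1-a_{ij})/\lambda=0$ for distinct non-adjacent $i,j$, and $c^Tu_i=1/\sqrt{\lambda}$.
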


Schrijver~\cite{Schrijver1979} proposed the following strengthening of the Lov\'asz number, based on the Delsarte~\cite{Delsarte1973} linear programming bound. 
\[\vartheta'(G) = \min\{\text{lev}(A): \text{$A$ is a symmetric $|V|\times |V|$ matrix such that $a_{ij} \ge 1$ if $\{i, j\}\notin E$}\}.\]

For graphs which are unions of classes of graphs in an association scheme, Schrijver's variant of the Lov\'asz number is equal to the Delsarte linear programming bound. Since the graphs we consider in this note are unions of classes of graphs in the Johnson scheme, we call $\vartheta'$ the \emph{Delsarte number}. Schrijver~\cite[Theorem 1]{Schrijver1979} proved that the Delsarte number is sandwiched between the independence number and the Lov\'asz number. 

\begin{theorem}[Schrijver]\label{thm:inddel}
For any graph $G$, 
\[\alpha(G) \le \vartheta'(G) \le \vartheta(G).\]
\end{theorem}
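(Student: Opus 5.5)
The plan is to prove the two inequalities separately. The right-hand one, $\vartheta'(G) \le \vartheta(G)$, is a comparison of feasible sets. The left-hand one, $\alpha(G) \le \vartheta'(G)$, is a Rayleigh-quotient argument applied to the indicator vector of a maximum independent set.

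For $\vartheta'(G)\le\vartheta(G)$: any symmetric matrix $A$ with $a_{ij}=1$ whenever $\{i,j\}\notin E$ also satisfies $a_{ij}\ge 1$ for those pairs. So the feasible set in the definition of $\vartheta(G)$ is contained in the feasible set defining $\vartheta'(G)$. A minimum (or infimum) of $\text{lev}(A)$ over a larger set can only be smaller, which gives the inequality at once.

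For $\alpha(G)\le\vartheta'(G)$, let $S\subseteq V$ be an independent set with $|S|=\alpha(G)$ and let $x\in\{0,1\}^{V}$ be its indicator vector. Fix any symmetric $A$ that is feasible for $\vartheta'$.
\begin{itemize}
\item By the Rayleigh quotient characterisation of the largest eigenvalue of a real symmetric matrix, $x^{T}Ax \le \text{lev}(A)\,x^{T}x = \text{lev}(A)\,|S|$.
\item On the other hand, $x^{T}Ax=\sum_{i,j\in S} a_{ij}$. For distinct $i,j\in S$ we have $\{i,j\}\notin E$ because $S$ is independent, so $a_{ij}\ge 1$.
\item For the diagonal terms $i=j$, the graph has no loops, so $\{i,i\}\notin E$. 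The constraint therefore also forces $a_{ii}\ge 1$. This is the standard reading of the definition, as in Lov\'asz's, where the diagonal is included among the constrained entries.
\end{itemize}
Hence $x^{T}Ax\ge |S|^{2}$. Combining the two bounds gives $|S|^{2}\le \text{lev}(A)|S|$, that is, $\text{lev}(A)\ge \alpha(G)$ for every feasible $A$. Taking the minimum over $A$ yields $\alpha(G)\le\vartheta'(G)$.

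The only point requiring care is whether the diagonal entries are constrained. If the definition were read as constraining only off-diagonal non-edges, the argument would need $a_{ii}\ge 1$ supplied some other way. I would resolve this by noting explicitly that $\{i,i\}$ is never an edge, so the diagonal is covered, exactly as in the definition of $\vartheta$.

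A secondary remark is that the minimum is attained, or else one works with the infimum. The inequality $\text{lev}(A)\ge\alpha(G)$ holds for every feasible $A$, so it passes to the infimum either way and no compactness argument is needed.
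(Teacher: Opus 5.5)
Your proof is correct. You are also right that the diagonal constraint is essential rather than a convention. Under the paper's wording $\{i,i\}\notin E$, so $a_{ii}\ge 1$. Without that constraint the statement would fail: for $K_n$ every symmetric matrix would be feasible, and $\text{lev}(A)$ would be unbounded below.

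The paper gives no argument of its own and simply cites Schrijver. The usual argument there uses the maximization form $\vartheta'(G)=\max\{\sum_{i,j}b_{ij} : B\succeq 0,\ b_{ij}\ge 0,\ \mathrm{tr}\,B=1,\ b_{ij}=0 \text{ for } \{i,j\}\in E\}$. In that form both inequalities are feasibility observations. First, $\vartheta'\le\vartheta$ because entrywise nonnegativity is an extra constraint on Lov\'asz's maximization problem. Second, $\alpha\le\vartheta'$ because $B=\frac{1}{|S|}\mathbf{1}_S\mathbf{1}_S^{T}$ is feasible with objective value $|S|$.

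Your Rayleigh-quotient step is exactly the weak-duality pairing of this $B$ with a feasible $A$, since $\langle A,B\rangle=\frac{1}{|S|}\mathbf{1}_S^{T}A\mathbf{1}_S$. So the underlying idea is the same. What your version gains is that it works directly with the eigenvalue (minimization) definitions the paper actually states. It therefore needs no appeal to the semidefinite duality that identifies the two formulations.
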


Recall that $G(n, k, 1)$ is the graph whose edge-set consists of all $2$-sets $\{A, B\}$ with $|A \cap B| = 1$. We will show that $\vartheta'(G(n, k, 1)) \le \binom{n-2}{k-2}$ when $3k-3\le n\le k^2-k+1$. We shall define a matrix $M$ which satisfies the properties in the definition of $\vartheta'$ for the graph $G(n, k, 1)$ and which has largest eigenvalue $\binom{n-2}{k-2}$. In fact, we define $M$ by using Wilson's matrix~\cite{Wilson1984} from the proof of the $t$-intersecting Erd\H{o}s-Ko-Rado theorem.

Let us recall Wilson's matrix. Define

\begin{equation}\label{wilsonmatrix}
A_t = \sum_{i=0}^{t-1}(-1)^{t-1-i}\binom{k-1-i}{k-t}\binom{n-k-t+i}{k-t}^{-1}D_{k-i},
\end{equation}
where if $\alpha$ and $\beta$ are two $k$-subsets, then
\[(D_i)_{\alpha, \beta} = \binom{|\alpha \setminus \beta|}{i} = \binom{k-|\alpha \cap \beta|}{i}.\] 

Wilson~\cite[Lemma II]{Wilson1984} proved the following properties of the eigenvalues of the matrix $A_t$, in order to prove the $t$-intersecting Erd\H{o}s-Ko-Rado theorem. 

\begin{lemma}\label{lem:wilsoneigs}
Let $\theta_0, \theta_1, \ldots,  \theta_k$ be the eigenvalues of the matrix $A_t$ defined in \eqref{wilsonmatrix}. Then, 
\begin{enumerate}
\item $\theta_0 = \binom{n}{k}\binom{n-t}{k-t}^{-1}-1.$
\item $\theta_1 =  \cdots = \theta_t  = -1$. 
\item Assume $n\ge (t+1)(k-t+1)$. Then, $\theta_i \ge -1$ for $i=1, \ldots, k$. 
\end{enumerate}
\end{lemma}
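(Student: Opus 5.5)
The plan is to work entirely inside the Bose--Mesner algebra of the Johnson scheme $J(n,k)$. Each $D_i$ has entries depending only on $|\alpha\cap\beta|$, so $D_i$ and $A_t$ lie in this algebra. Hence $A_t$ acts as a scalar $\theta_j$ on each of the common eigenspaces $V_0, V_1, \ldots, V_k$ of the scheme, where $V_0$ is spanned by the all-ones vector and $\dim V_j=\binom nj-\binom n{j-1}$. The proof therefore reduces to three steps: compute the eigenvalue of each $D_{k-i}$ on each $V_j$, sum with the coefficients in \eqref{wilsonmatrix}, and analyse the resulting expression.

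To get the eigenvalues of $D_s$, I will factor it through inclusion matrices. Let $W_s$ be the $\binom ns\times\binom nk$ matrix with $(W_s)_{\sigma,\alpha}=1$ iff $\sigma\subseteq\alpha$, and let $\overline W_s$ have $(\overline W_s)_{\sigma,\beta}=1$ iff $\sigma\cap\beta=\emptyset$. Then $W_s^{T}\overline W_s$ counts the $s$-sets contained in $\alpha$ and disjoint from $\beta$, i.e.\ it equals $D_s$. The standard theory of inclusion matrices (row space of $W_s^T$ equals $V_0\oplus\cdots\oplus V_s$, together with the relation $\overline W_s=\sum_h(-1)^h W_{s,h}W_h$ from inclusion--exclusion) gives the following. $D_s$ vanishes on $V_j$ for $j>s$, and on $V_j$ with $j\le s$ it has eigenvalue
\[(-1)^j\binom{k-j}{s-j}\binom{n-k-j}{s-j}.\]
Substituting $s=k-i$, the eigenvalue of $A_t$ on $V_j$ is
\[\theta_j=\sum_{i=0}^{\min(t-1,\,k-j)}(-1)^{t-1-i+j}\binom{k-1-i}{k-t}\binom{n-k-t+i}{k-t}^{-1}\binom{k-j}{k-i-j}\binom{n-k-j}{k-i-j}.\]

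With this formula in hand, parts 1 and 2 become binomial identities. For $j=0$ the sum should telescope, via a Vandermonde-type identity, to $\binom nk\binom{n-t}{k-t}^{-1}-1$; a consistency check is that $J=\sum_s(\pm)D_s$-type expansions give the all-ones eigenvalue $\binom nk$. For $1\le j\le t$ I expect the terms to collapse to $-1$. My approach here is to rewrite $\binom{k-1-i}{k-t}\binom{k-j}{i}$ and the ratio of the $n$-binomials as coefficients of a polynomial in $i$ of degree less than $t$. The alternating sum over $i$ then becomes a finite difference, which vanishes except for a boundary term. I will check small cases such as $t=1,2$ first to fix the normalisation.

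Part 3, namely $\theta_j\ge -1$ for $t<j\le k$ under $n\ge(t+1)(k-t+1)$, is the main obstacle; in Wilson's paper it is the delicate step. I would follow Wilson: for $j>t$, express $\theta_j+1$, or $\theta_j$ itself, as an alternating sum and show that consecutive terms decrease in absolute value. The ratio of consecutive terms is a product of factors like $\frac{(k-j-i)(\dots)}{(n-k-t+i+1)(\dots)}$, which the hypothesis $n\ge(t+1)(k-t+1)$ should bound by $1$. The alternating sum is then sandwiched between its first two partial sums, giving $|\theta_j|\le$ (largest term) $\le 1$. The remaining work is to identify the leading term and to verify the ratio bound at its worst case $j=t+1$.
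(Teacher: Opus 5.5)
The paper does not prove this lemma; it quotes Wilson's Lemma II. Your framework (Bose--Mesner algebra, $D_s=W_s^{T}\overline W_s$, inclusion-matrix eigenvalues, then binomial identities) is Wilson's. Two steps, however, fail as written.

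\textbf{The eigenvalue of $D_s$.} Your eigenvalue of $D_s$ on $V_j$ is wrong. For $j\le s$ it is $(-1)^j\binom{k-j}{s-j}\binom{n-s-j}{k-j}$, not $(-1)^j\binom{k-j}{s-j}\binom{n-k-j}{s-j}$; the two agree only when $s=k$. The consistency check you mention already exposes this. $D_0=J$ has eigenvalue $\binom nk$ on $V_0$, and $D_1$ has row sums $k\binom{n-1}{k}$, whereas your formula gives $1$ and $k(n-k)$. Because $A_t$ involves $D_{k-1},\dots,D_{k-t+1}$ as soon as $t\ge 2$, your displayed $\theta_j$ is wrong. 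The identities you hope to extract for parts 1 and 2 are false for it: for $t=2$, $j=1$ it gives $\theta_1=(n-k-1)-(k-1)=n-2k$. With the correct eigenvalues, $t=2$ gives
\[\theta_j=(-1)^j\Bigl[(k-j)\binom{n-k+1-j}{k-j}\binom{n-k-1}{k-2}^{-1}-(k-1)\binom{n-k-j}{k-j}\binom{n-k-2}{k-2}^{-1}\Bigr],\]
from which $\theta_1=\theta_2=-1$ and $\theta_0=\frac{(n-k)(n+k-1)}{k(k-1)}=\binom nk\binom{n-2}{k-2}^{-1}-1$ follow at once.

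\textbf{Part 3.} Your plan targets a false statement and the wrong index. It is not true that $|\theta_j|\le 1$ for all $t<j\le k$. Carrying out the cancellation (partial fractions in $i$ plus finite differences) gives
\[\theta_{t+1}=\binom{n-t-1}{t}\binom{n-k-1}{t}^{-1}-1.\]
This is positive and can exceed $1$: for $t=2$, $k=4$, $n=9=(t+1)(k-t+1)$ one gets $\theta_3=3/2$. So $j=t+1$ is harmless, and the binding index is $j=t+2$. For $k\ge t+2$ one finds $\theta_{t+2}\ge -1$ if and only if $n\ge (t+1)(k-t+1)$, with $\theta_{t+2}=-1$ exactly at the threshold (in the example, $\theta_4=-1$).

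Nor can an alternating-series bound by the largest term work. The individual terms of the sum over $i$ are of order $k$ near the threshold: for $t=2$, $j=4$, $n=3k-3$ the $i=0$ term has absolute value $\frac{(k-1)(k-2)(k-3)}{(2k-5)(2k-6)}$. The hypothesis on $n$ enters through exact cancellation, not through a bound on ratios of consecutive terms; for $t=2$ the second term is smaller than the first for every $n$.

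What is needed is:
\begin{enumerate}
\item a closed form for $\theta_j$, $j>t$, with the cancellation carried out;
\item a sharp treatment of $j=t+2$;
\item a separate argument for $j\ge t+3$.
\end{enumerate}
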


We use Lemma~\ref{lem:wilsoneigs} to define a matrix $M$ which fits the definition of $\vartheta'$ and which has the appropriate largest eigenvalue. 

\begin{theorem}\label{thm:wilsonworks}
Let $M$ be the $\binom{n}{k}\times \binom{n}{k}$ matrix defined by $M = J - \binom{n-2}{k-2}A_2$, where $J$ is the $\binom{n}{k} \times \binom{n}{k}$ all-$1$s matrix and $A_2$ is the matrix defined in \eqref{wilsonmatrix} with $t=2$. Then, 
\begin{enumerate}
\item Let $A$ and $B$ be two $k$-element subsets of $[n]$ with $|A\cap B| \neq 1$. For $n\le k^2-k+1$, we have that \[M_{A, B} \ge 1.\]
\item For $n\ge 3k-3$, the largest eigenvalue of $M$ is $\binom{n-2}{k-2}$. 
\end{enumerate}
\end{theorem}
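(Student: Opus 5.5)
The plan is to compute the entries of $A_2$ explicitly, which settles part 1, and to read off the spectrum of $M$ from Lemma~\ref{lem:wilsoneigs}, which settles part 2.

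\textbf{Entries of $A_2$.} Setting $t=2$ in \eqref{wilsonmatrix} gives
\[
A_2 = -\frac{k-1}{\binom{n-k-2}{k-2}}D_k + \frac{1}{\binom{n-k-1}{k-2}}D_{k-1}.
\]
Write $s = |A\cap B|$. Then $(D_k)_{A,B} = \binom{k-s}{k}$, which equals $1$ if $s=0$ and $0$ otherwise. Similarly $(D_{k-1})_{A,B} = \binom{k-s}{k-1}$, which equals $k$ if $s=0$, equals $1$ if $s=1$, and is $0$ if $s\ge 2$.

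\textbf{Part 1.} Since $M_{A,B} = 1 - \binom{n-2}{k-2}(A_2)_{A,B}$, it suffices to show $(A_2)_{A,B}\le 0$ whenever $s\neq 1$.

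For $s\ge 2$, including the diagonal case $s=k$, both $D$-entries vanish. Hence $(A_2)_{A,B}=0$ and $M_{A,B}=1$.

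The only real case is $s=0$. There we need
\[
\frac{k}{\binom{n-k-1}{k-2}} \le \frac{k-1}{\binom{n-k-2}{k-2}}.
\]
Using the ratio $\binom{n-k-1}{k-2}/\binom{n-k-2}{k-2} = (n-k-1)/(n-2k+1)$, this becomes $k(n-2k+1)\le (k-1)(n-k-1)$. Expanding both sides, it simplifies to exactly $n\le k^2-k+1$. The case $s=1$ is an edge of $G(n,k,1)$, so it imposes no constraint.

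\textbf{Part 2.} $A_2$ is a linear combination of the $D_i$, so it lies in the Bose--Mesner algebra of the Johnson scheme. The matrix $J$ also lies in this algebra. Hence $J$ and $A_2$ are simultaneously diagonalized by the eigenspaces $V_0,\dots,V_k$, where $V_0$ is spanned by the all-ones vector and $J$ vanishes on $V_1,\dots,V_k$.

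On $V_0$, the eigenvalue of $M$ is
\[
\binom{n}{k} - \binom{n-2}{k-2}\left(\binom{n}{k}\binom{n-2}{k-2}^{-1} - 1\right) = \binom{n-2}{k-2},
\]
by part 1 of Lemma~\ref{lem:wilsoneigs}.

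On $V_i$ with $i\ge 1$, the eigenvalue of $M$ is $-\binom{n-2}{k-2}\theta_i$. The hypothesis $n\ge 3k-3$ is exactly $n\ge (t+1)(k-t+1)$ for $t=2$. So part 3 of Lemma~\ref{lem:wilsoneigs} gives $\theta_i\ge -1$, and therefore $-\binom{n-2}{k-2}\theta_i \le \binom{n-2}{k-2}$. This value is attained, e.g.\ on $V_1$, where $\theta_1=-1$.

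Thus the largest eigenvalue of $M$ is $\binom{n-2}{k-2}$. The only nontrivial step is the algebra in the $s=0$ case of part 1, which produces precisely the upper threshold $k^2-k+1$.
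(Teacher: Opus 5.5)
Your proof is correct and follows the paper's argument. Part 1 reduces, via the explicit entries of $A_2$, to the disjoint case $|A\cap B|=0$, where the required inequality is equivalent to $n\le k^2-k+1$, and part 2 comes from Lemma~\ref{lem:wilsoneigs}; your simultaneous-diagonalization argument (eigenvalue $\binom{n-2}{k-2}$ on $V_0$, and $-\binom{n-2}{k-2}\theta_i\le\binom{n-2}{k-2}$ on $V_i$ for $i\ge 1$, with equality at $i=1$) just spells out what the paper calls a direct consequence of that lemma.
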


\begin{proof}[Proof of Theorem~\ref{thm:wilsonworks}]
Item 2 of the theorem statement is a direct consequence of Lemma~\ref{lem:wilsoneigs}. We now prove the first item in the theorem. 
If $A$ and $B$ are two sets with $|A\cap B|\ge 2$, then $M_{A, B} = J_{A, B} = 1$. We therefore suppose that $|A\cap B| = 0$. Then the $(A, B)$-entry of $M$ is 
\[1 - \binom{n-2}{k-2}\left((-1)\frac{(k-1)}{\binom{n-k-2}{k-2}} + \frac{k}{\binom{n-k-1}{k-2}}\right).\]
We show that this expression is at least $1$ when $n\le k^2-k+1$. It suffices to show 
\[(-1)\frac{(k-1)}{\binom{n-k-2}{k-2}} + \frac{k}{\binom{n-k-1}{k-2}} \le 0.\]
This inequality is equivalent to 
\[\frac{k\binom{n-k-2}{k-2}}{\binom{n-k-1}{k-2}} \le k-1,\]
which after simplifying the binomial coefficients is equivalent to 
\[\frac{n-k-1}{n-2k+1} \ge \frac{k}{k-1} \iff n\le k^2-k+1.\]
\end{proof}

Theorem~\ref{thm:wilsonworks} quickly implies Theorem~\ref{thm:nosingletonint}. 

\begin{proof}[Proof of Theorem~\ref{thm:nosingletonint}]
The matrix $M$ fits in the definition of $\vartheta'$, so Theorem~\ref{thm:wilsonworks} implies that $\vartheta'(G(n, k, 1)) = \binom{n-2}{k-2}$ when $3k-3\le n \le k^2-k+1$. It follows by Theorem~\ref{thm:inddel} that $\alpha(G(n, k, 1)) = \binom{n-2}{k-2}$. 
\end{proof}

If $n=k^2-k+1$, then the calculation in the proof of Theorem~\ref{thm:wilsonworks} shows $M_{A, B} = 1$ if $|A\cap B| = 0$, so the matrix $M$ also satisfies the condition in the definition of the Lov\'asz number, implying $\vartheta(G(k^2-k+1, k, 1)) = \binom{k^2-k-1}{k-2}$, which is exactly the result proved by Cherkashin. Thus, the Shannon capacity of the graph $G(k^2-k+1, k, 1)$ is also equal to $\binom{k^2-k-1}{k-2}$. 

We show that $\vartheta(G(n, k, 1)) > \binom{n-2}{k-2}$ when $n < k^2-k+1$. In particular, the graphs $G(n, k, 1)$ with $3k-3\le n < k^2-k+1$ and $k\ge 3$ provide an infinite family of graphs with $\vartheta'(G) < \vartheta(G)$. (Schrijver~\cite{Schrijver1979} gives an example of a graph $G$ with $\vartheta'(G) < \vartheta(G)$ due to M.R. Best at the end of his paper). 

\begin{theorem}\label{thm:gkk-11lovasztheta}
If $3k-3\le n < k^2-k+1$ and $k\ge 3$,  then 
\[\vartheta(G(n, k, 1)) > \binom{n-2}{k-2}.\]
\end{theorem}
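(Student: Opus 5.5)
Because $G(n,k,1)$ is a union of classes of the Johnson scheme, I will show that $\vartheta(G(n,k,1))$ is computed by a linear program, and then exhibit an explicit dual feasible solution whose value exceeds $\binom{n-2}{k-2}$.

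\textbf{Reduction to the scheme.} The symmetry group $S_n$ acts on $\binom{[n]}{k}$ and preserves the graph. Averaging an optimal matrix $A$ over this group keeps it feasible and does not increase $\text{lev}(A)$, since $\text{lev}$ is convex and invariant under conjugation. So we may assume $A$ lies in the Bose--Mesner algebra, i.e.\ $A=\sum_{j} a_j C_j$, where $C_j$ is the adjacency matrix of the relation $|\alpha\cap\beta|=j$. The constraint of $\vartheta$ forces $a_j=1$ for every $j\neq 1$; only $a_1=x$ is free. Hence
\[\vartheta(G(n,k,1))=\min_{x\in\mathbb{R}}\max_{i}\Big(\sum_{j\ne 1}P_{ij}+xP_{i1}\Big),\]
where $P_{ij}$ are the Johnson eigenvalues (Eberlein polynomials). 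Similarly, $\vartheta'$ is the same optimization except that $a_0$ is only required to satisfy $a_0\ge 1$. The matrix $M$ in Theorem~\ref{thm:wilsonworks} has $a_0>1$ exactly when $n<k^2-k+1$.

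\textbf{Dual certificate.} The dual of this min-max is the following. We look for a vector $y\ge 0$ on the eigenspaces $V_0,\dots,V_k$ with $\sum_i y_i m_i=\binom{n}{k}$ (equivalently, a positive semidefinite matrix $Y$ in the algebra with trace $\binom{n}{k}$). We require $\langle Y,C_1\rangle=0$, and the dual value is $\langle Y,\sum_{j\ne1}C_j\rangle$. The $\vartheta'$-certificate is supported on $V_0,V_1,V_2$, the eigenspaces where Wilson's $\theta_i$ are tight. It forces the $C_0$-coordinate of $Y$ to be $\le 0$ and gives value $\binom{n-2}{k-2}$. For $\vartheta$ that sign condition disappears.

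So the plan is a perturbation argument. Start from the optimal $\vartheta'$ dual solution $Y^*$. Complementary slackness with $M$ says that, since $a_0>1$ strictly, the $C_0$-entry of $Y^*$ is zero, or else $Y^*$ must be degenerate. Then perturb in a direction that increases the objective while keeping $\langle Y,C_1\rangle=0$ and $Y\succeq 0$. Concretely: take the primal side and show that no single $x$ achieves $\max_i\le\binom{n-2}{k-2}$ once $a_0=1$ is forced. With $a_0=1$, I will compute the eigenvalues on $V_0$, $V_1$ and $V_2$ explicitly as affine functions of $x$:
\[\lambda_i(x)=\Big(\sum_{j\ne1}P_{ij}\Big)+xP_{i1},\qquad i=0,1,2.\]
The claim is that the region $\{x:\lambda_0(x)\le\binom{n-2}{k-2}\}$ and the region $\{x:\lambda_1(x)\le\binom{n-2}{k-2},\ \lambda_2(x)\le\binom{n-2}{k-2}\}$ are disjoint when $n<k^2-k+1$. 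Any valid $A$ has $\text{lev}\ge\lambda_i$ for each $i$, so disjointness gives the strict inequality.

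\textbf{Main obstacle.} The hardest step is computing $P_{i1}$ and $\sum_{j\ne1}P_{ij}$ for $i=0,1,2$ and checking the resulting linear inequalities. These eigenvalues are known in closed form:
\begin{itemize}
\item $P_{01}=k\binom{n-k}{k-1}$;
\item $P_{i1}$ is the Eberlein value $E_1(i)$;
\item $\sum_j P_{ij}=0$ for $i\ge1$, so $\sum_{j\ne1}P_{ij}=-P_{i1}$ for $i\ge 1$, and $\sum_{j\ne 1}P_{0j}=\binom{n}{k}-P_{01}$.
\end{itemize}
Then $\lambda_i(x)=(x-1)P_{i1}$ for $i=1,2$, and $\lambda_0(x)=\binom nk+(x-1)P_{01}$. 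Writing $u=x-1$, the conditions become:
\begin{itemize}
\item $\lambda_0$ needs $u\le -\big(\binom nk-\binom{n-2}{k-2}\big)/P_{01}$, which is negative;
\item $\lambda_1$ and $\lambda_2$ need $uP_{i1}\le\binom{n-2}{k-2}$. If some $P_{i1}>0$ this bounds $u$ above; I expect $P_{11}<0$, so it gives a lower bound on $u$.
\end{itemize}
This reduces to comparing the two bounds on $u$, and I expect the comparison to reduce to exactly $n<k^2-k+1$, mirroring the computation in Theorem~\ref{thm:wilsonworks}. If $V_1$ and $V_2$ alone do not suffice, I would also bring in $V_i$ for $i\ge 3$, using the same sign analysis on the values $E_1(i)$.
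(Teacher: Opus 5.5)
Your route is correct, and it differs from the paper's in a useful way.

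\textbf{Comparison with the paper.} The paper quotes Lov\'asz's formula $\vartheta=\frac{-\lambda_{\min}}{\lambda_1-\lambda_{\min}}\binom nk$ for edge-transitive graphs. It then needs the theorem of Brouwer, Cioab\u{a}, Ihringer and McGinnis that the smallest eigenvalue of $G(n,k,1)$ is $p_{k-1}(1)$ when $n\le k^2-k+1$. This gives the exact value $\vartheta(G(n,k,1))=\frac{k^2-n}{n(k-1)}\binom nk$. Your symmetrization re-derives only the lower-bound half of that formula. Since the eigenvalue of $J+uC_1$ on any single eigenspace already bounds $\text{lev}$ from below, you never need to know which eigenvalue is smallest. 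So you avoid the Brouwer--Cioab\u{a}--Ihringer--McGinnis result entirely. $V_0$ and $V_1$ suffice, and the hedging with $V_2$ and $V_i$, $i\ge 3$, is unnecessary. The price is that you obtain only the inequality, not the exact value of $\vartheta$. The paper uses the exact value to relate to Cherkashin's result at $n=k^2-k+1$.

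\textbf{The step you left unfinished.} As written, you stop exactly where the proof happens (``I expect''). The computation does go through. Applying $C_1$ to $\mathbf 1_{\{p\in\alpha\}}-\frac kn\mathbf 1$ gives
\[P_{11}=\binom{n-k}{k-1}-k\binom{n-k-1}{k-2}=\frac{n-k^2}{n-2k+1}\binom{n-k-1}{k-1}<0,\]
and $P_{01}=k\binom{n-k}{k-1}=\frac{k(n-k)}{n-2k+1}\binom{n-k-1}{k-1}$. Your two constraints are
\[u\le-\Big(\binom nk-\binom{n-2}{k-2}\Big)/P_{01}\quad\text{and}\quad u\ge \binom{n-2}{k-2}/P_{11}.\]
They are incompatible if and only if $\binom{n-2}{k-2}<\frac{-P_{11}}{P_{01}-P_{11}}\binom nk=\frac{k^2-n}{n(k-1)}\binom nk$. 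Using $\binom{n-2}{k-2}=\frac{k(k-1)}{n(n-1)}\binom nk$, this is equivalent to $(n-k)(k^2-k+1-n)>0$. That holds in the stated range, so your expectation is confirmed.

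Strictness is fine. The minimum defining $\vartheta$ is attained, and symmetrizing an optimizer gives $J+uC_1$ for one particular $u$. At that $u$, one of the two eigenvalues exceeds $\binom{n-2}{k-2}$.

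\textbf{The dual paragraph.} It is not needed, and it contains a sign slip. In Schrijver's dual the coordinates on non-edges must be $\ge 0$, not $\le 0$. The optimal $\vartheta'$ certificate, proportional to $\binom{|\alpha\cap\beta|}{2}$, has $C_0$-coordinate $0$. It is $\vartheta$ that permits making this coordinate negative.
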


\begin{proof}[Proof of Theorem~\ref{thm:gkk-11lovasztheta}]
The graphs $G(n, k, 1)$ are edge-transitive, so by a theorem of Lov\'asz~\cite[Theorem 9]{Lovasz1979}, 
\[\vartheta(G(n, k, 1)) = \frac{-\lambda_{\binom{n}{k}}}{\lambda_1 - \lambda_{\binom{n}{k}}}\binom{n}{k},\]
where $\lambda_1$ is the maximum eigenvalue of $G(n, k, 1)$ and $\lambda_{\binom{n}{k}}$ is the minimum eigenvalue. 
The eigenvalues of the graph $G(n, k, 1)$ are as follows (see, for example, \cite{Godsil2016}). 

\begin{lemma}[Eigenvalues of $G(n, k, 1)$]\label{genknesereigs}
The eigenvalues of $G(n, k, 1)$ are 
\begin{align*}
p_{k-1}(j) &= \sum_{r=k-1}^{k}(-1)^{r-k+\ell+j}\binom{r}{k-1}\binom{n-2r}{k-r}\binom{n-r-j}{r-j}\\
		 & = \sum_{r=0}^{k-1}(-1)^r\binom{j}{r}\binom{k-j}{k-1-r}\binom{n-k-j}{k-1-r}
\end{align*}
for $j = 0, \ldots, k$.
\end{lemma}

Brouwer, Cioab\u{a}, Ihringer, and McGinnis~\cite[Theorem 3.10]{Brouwer2018} showed that the minimum eigenvalue of $G(n, k, 1)$ is attained at $j=1$ if $n\le k^2-k+1$. Hence, we have $\lambda_{\binom{n}{k}} = p_{k-1}(1) = \binom{n-k-1}{k-1}-(k-1)\binom{n-k-1}{k-2} = \frac{n-k^2}{n-2k+1}\binom{n-k-1}{k-1}$, while $\lambda_1 = p_{k-1}(0) = k\binom{n-k}{k-1}$. Therefore, 
\begin{align*}
\vartheta(G(n, k, 1)) &= \frac{\frac{k^2-n}{n-2k+1}\binom{n-k-1}{k-1}}{k\binom{n-k}{k-1}+ \frac{k^2-n}{n-2k+1}\binom{n-k-1}{k-1}}\binom{n}{k}\\
&= \frac{\frac{k^2-n}{n-2k+1}\binom{n-k-1}{k-1}}{\frac{k(n-k)}{n-2k+1}\binom{n-k-1}{k-1}+ \frac{k^2-n}{n-2k+1}\binom{n-k-1}{k-1}}\binom{n}{k}\\
&=\frac{k^2-n}{n(k-1)}\binom{n}{k}\\
\end{align*}
To complete the proof, we need to check that $\frac{k^2-n}{n(k-1)}\binom{n}{k} > \binom{n-2}{k-2}$ when $3k-3 \le n < k^2-k+1$. Since $\binom{n-2}{k-2} = \frac{k(k-1)}{n(n-1)}\binom{n}{k}$, it suffices to prove
\[\frac{k^2-n}{n(k-1)} > \frac{k(k-1)}{n(n-1)} \iff -(n-k)(n-(k^2-k+1)) > 0,\]
which clearly holds if $3k-3 \le n < k^2-k+1$.  
\end{proof}

Finally, let us note that neither the Lov\'asz number nor the Delsarte number can be used to determine $\alpha(G(n, k, 1))$ for large $n$.  Indeed, Theorem~\ref{thm:franklsingleton} implies that $\alpha(G(n, k, 1)) = \binom{n-2}{k-2}$ for large $n$, but it follows from a result of the author~\cite{Linz2025} that as $n\rightarrow \infty$ 
\[\vartheta(G(n, k, 1)) = \Theta(n^{k-1})\]
and also 
\[\vartheta'(G(n, k, 1)) = \Theta(n^{k-1}).\]

\section{Concluding remarks} 

The method used to prove Theorem~\ref{thm:nosingletonint} can also be used for other $L$-systems for small values of $n$. One example of such a theorem is the following. 

\begin{theorem}\label{thm:t-2}
Let $t$ and $k$ be integers with $2\le t\le \frac{k}{2} + 1$. Let $\cF \subset \binom{[n]}{k}$ be a $(n, k, \{t-2\} \cup \{t, \ldots,  k-1\})$-system. Then for $(t+1)(k-t+1)\le n\le  k^2+k(3-2t)+(t-1)^2$, we have 
\[|\cF| \le \binom{n-t}{k-t}.\]
\end{theorem}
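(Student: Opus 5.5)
The plan is to copy the proof of Theorem~\ref{thm:nosingletonint} with Wilson's matrix $A_t$ in place of $A_2$. Set
\[M = J - \binom{n-t}{k-t}A_t,\]
with $A_t$ as in \eqref{wilsonmatrix}. The graph whose independent sets are exactly the $(n,k,\{t-2\}\cup\{t,\ldots,k-1\})$-systems has $A\sim B$ iff $|A\cap B|\le t-3$ or $|A\cap B|=t-1$. So for $\vartheta'$ we need two things: $M_{A,B}\ge 1$ whenever $|A\cap B|\ge t$ or $|A\cap B|=t-2$, and $\mathrm{lev}(M)=\binom{n-t}{k-t}$. Theorem~\ref{thm:inddel} then gives $|\cF|\le\alpha\le\vartheta'\le\binom{n-t}{k-t}$.

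\textbf{Eigenvalues.} $J$ and $A_t$ lie in the Johnson scheme, so they are simultaneously diagonalisable, and the all-ones vector is the $\theta_0$-eigenvector of $A_t$.
\begin{itemize}
\item On the all-ones vector, Lemma~\ref{lem:wilsoneigs}(1) gives the eigenvalue
\[\binom{n}{k}-\binom{n-t}{k-t}\theta_0=\binom{n-t}{k-t}.\]
\item On the other eigenspaces, $J$ vanishes and $M$ has eigenvalue $-\binom{n-t}{k-t}\theta_i$. By Lemma~\ref{lem:wilsoneigs}(3), which needs $n\ge (t+1)(k-t+1)$, we have $\theta_i\ge -1$, so this is at most $\binom{n-t}{k-t}$.
\end{itemize}
This is exactly item 2 of Theorem~\ref{thm:wilsonworks} with general $t$.

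\textbf{Entries with $|A\cap B|\ge t$.} Here $(D_{k-i})_{A,B}=\binom{k-|A\cap B|}{k-i}$ with $k-|A\cap B|\le k-t<k-i$ for all $i\le t-1$. Every such entry is $0$, so $M_{A,B}=1$.

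\textbf{Entries with $|A\cap B|=t-2$.} This is the main computation. Now $k-|A\cap B|=k-t+2$, so only the terms $i=t-2$ and $i=t-1$ of \eqref{wilsonmatrix} survive:
\[(A_t)_{A,B}= -\frac{(k-t+1)}{\binom{n-k-2}{k-t}}+\frac{(k-t+2)}{\binom{n-k-1}{k-t}}.\]
It suffices that this is $\le 0$. Using
\[\binom{n-k-2}{k-t}\Big/\binom{n-k-1}{k-t}=\frac{n-2k+t-1}{n-k-1},\]
the condition becomes
\[(k-t+2)(n-2k+t-1)\le (k-t+1)(n-k-1).\]
This inequality is linear in $n$ with coefficient $1$ on the left minus the right. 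Expanding it should give exactly
\[n\le (k-t+2)(2k-t+1)-(k-t+1)(k+1)=k^2+k(3-2t)+(t-1)^2,\]
which is the stated upper bound; I expect this bookkeeping to check out.

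\textbf{Main obstacle.} The main obstacle is making sure the binomials are legitimate, i.e.\ $n-k-2\ge k-t$, so that the ratio manipulation is valid and the denominators are nonzero. This is where $t\le k/2+1$ should enter, together with $n\ge (t+1)(k-t+1)$. For instance, $t\le k/2+1$ makes the lower bound on $n$ at least about $2k$, and it also keeps the interval of $n$ nonempty. I would verify this small inequality separately before concluding.
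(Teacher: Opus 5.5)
Your proposal is correct and is essentially the paper's own argument, which the paper describes as ``completely analogous'' to the $t=2$ case: take $M=J-\binom{n-t}{k-t}A_t$, get $\text{lev}(M)=\binom{n-t}{k-t}$ from Lemma~\ref{lem:wilsoneigs}, and check the $|A\cap B|=t-2$ entries, where your expansion does give exactly $n\le k^2+k(3-2t)+(t-1)^2$. The check you deferred also goes through, since $(t+1)(k-t+1)-(2k-t+2)=(t-1)k-(t^2-t+1)\ge 0$ whenever $k\ge 2t-2$, except when $k=t=2$ and $n=3$, where no two $2$-subsets of $[3]$ are disjoint and the entry condition is vacuous.
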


The proof of Theorem~\ref{thm:t-2} is completely analogous to the proof of Theorem~\ref{thm:nosingletonint}. In particular, it can be shown that for $L = \{t-2\} \cup \{t, \ldots, k-1\}$, the graph $G(n, k, L)$ has Delsarte number $\vartheta'(G(n, k, L)) = \binom{n-t}{k-t}$. Furthermore, if $n=k^2+k(3-2t)+(t-1)^2$, then $\alpha(G(n, k, L)) = \vartheta'(G(n, k, L)) = \vartheta(G(n, k, L)) = \theta(G(n, k, L))$.



\begin{thebibliography}{00}

\bibitem{Ahlswede1997}
R.~Ahlswede, L.~Khachatrian, The Complete Intersection Theorem for Systems of Finite Sets, {\it Eur. J. Combinatorics} {\bf 18} (1997), 125--136.
\bibitem{Brouwer2018}
A.E.~Brouwer, S.~Cioab\u{a}, F.~Ihringer, M.~McGinnis, The smallest eigenvalues of Hamming graphs, Johnson graphs and other distance-regular graphs with classical parameters, {\it J. Comb. Theory Ser. B} {\bf 133} (2018) 88 -- 121. 
\bibitem{Cherkashin2024}
D.~Cherkashin, On set systems without singleton intersection, {\it Discrete Math. Lett.} {\bf 14} (2024) 85--88. 
\bibitem{Delsarte1973}
P.~Delsarte, {\it An algebraic approach to the association schemes in coding theory}, Philips Res. Reps. Suppl. 10, (1973). 
\bibitem{Ellis2024}
D.~Ellis, N.~Keller, N.~Lifshitz, Stability for the complete intersection theorem, and the forbidden intersection problem of Erd\H{o}s and S\'os, {\it J. Eur. Math. Soc.} {\bf 26(5)} (2024). 
\bibitem{Erdos1975}
P.~Erd\H{o}s, Problems and results in graph theory and combinatorial analysis, In: {\it Proceedings of the Fifth British Combinatorial Conference (University Aberdeen, 1975)}, Congressus Numerantium 15, Utilitas Mathematics, Winnipeg, MB (1976), 169--192. 
\bibitem{Erdos1961}
P.~Erd\H{o}s, C.~Ko, R.~Rado, Intersection theorems for systems of finite sets, {\it Quart. J. Math. Oxford Ser. (2)}, {\bf 12} (1961) 313--320.
\bibitem{Frankl1977}
P.~Frankl, On families of finite sets no two of which intersect in a singleton, {\it Bull. Austral. Math. Soc.},  {\bf 17} (1977) 125--134. 
\bibitem{Frankl1985}
P.~Frankl, Z.~F\"{u}redi, Forbidding just one intersection, {\it J. Comb. Theory Ser. A} {\bf 39} (1985) 160 -- 176.
\bibitem{Godsil2016}
C.~Godsil, K.~Meagher, {\it Erd\H{o}s-Ko-Rado Theorems: Algebraic Approaches}, Cambridge University Press, 2016.
\bibitem{Keevash2006}
P.~Keevash, D.~Mubayi, R.~Wilson,  Set systems with no singleton intersection, {\it SIAM J. Discrete Math} {\bf 20} 4 (2006) 1031--1041. 
\bibitem{Keller2021}
N.~Keller, N.~Lifhsitz, The junta method for hypergraphs and the Erd\H{o}s-Chv\'atal simplex conjecture, {\it Adv. Math.} {\bf 392} (2021) \#107991.
\bibitem{Kupavskii2024}
A.~Kupavskii, D.~Zakharov, Spread approximations for forbidden intersection problems, {\it Adv. Math.} {\bf 445} (2024) \#109653. 
\bibitem{Linz2025}
W.~Linz, $L$-systems and the Lov\'asz number, {\it Combinatorica} {\bf 45} no.~2 (2025) Paper No. 12 24pp.
\bibitem{Lovasz1979}
L.~Lov\'asz, On the Shannon capacity of a graph, {\it IEEE Trans. Inf. Theory} {\bf IT-25}(1) (1979) 1--7.
\bibitem{Schrijver1979}
A.~Schrijver, A Comparison of the Delsarte and Lov\'asz Bounds, {\it IEEE. Trans. Inf. Theory} {\bf IT-25} (4) 425--429. 
\bibitem{Wilson1984}
R.M.~Wilson, The exact bound in the Erd\H{o}s-Ko-Rado theorem, {\it Combinatorica} {\bf 4} (1984) 247--257.
\end{thebibliography}
\end{document}